\newtheorem{thm}{Theorem}[section]
\newtheorem{cor}[thm]{Corollary}
\newtheorem{prop}[thm]{Proposition}
\theoremstyle{definition}
\newtheorem{definition}[thm]{Definition}
\newtheorem{exm}[thm]{Example}
\numberwithin{equation}{section}
\begin{document}

\title[Partitions with Fixed Hooks]{Partitions with fixed points in the sequence of first-column hook lengths}
\author{Philip Cuthbertson}
\address{Department of Mathematical Sciences\\
Michigan Technological University\\ Houghton, MI 49931} \email{pecuthbe@mtu.edu}

\author{David J.\ Hemmer}
\address{Department of Mathematical Sciences\\
Michigan Technological University\\ Houghton, MI 49931} \email{djhemmer@mtu.edu}

\author{Brian Hopkins}
\address{Department of Mathematics and Statistics\\Saint Peter's University\\Jersey City, NJ 07306} \email{bhopkins@saintpeters.edu}
\date{October, 2023}

\author{William Keith}
\address{Department of Mathematical Sciences\\
Michigan Technological University\\ Houghton, MI 49931} \email{wjkeith@mtu.edu}
\date{December, 2023}
\begin{abstract}

Recently, Blecher and Knopfmacher applied the notion of fixed points to integer partitions.  This has already been generalized and refined in various ways such as $h$-fixed points for an integer parameter $h$ by Hopkins and Sellers.  Here, we consider the sequence of first column hook lengths in the Young diagram of a partition and corresponding \textit{fixed hooks}.  We enumerate these, using both generating function and combinatorial proofs, and find that they match occurrences of part sizes equal to their multiplicity.  We establish connections to work of Andrews and Merca on truncations of the pentagonal number theorem and classes of partitions partially characterized by certain minimal excluded parts (mex).
\end{abstract}

\maketitle

\section{Introduction}
Let $n$ be a positive integer. Recall that a \textit{partition} of $n$, denoted $\lambda \vdash n$, is a sequence $\lambda=(\lambda_1, \lambda_2, \ldots, \lambda_t)$ with $\lambda_i \geq \lambda_{i+1}>0$ and $\sum_{i=1}^t\lambda_i=n$.

Recently, Blecher and Knopfmacher 
\cite{BlecherFixedPointsRamanujan} 
expanded the notion of permutation fixed points to integer partitions.

\begin{definition}
A partition $\lambda$ has a \emph{fixed point} if there exists $i$ with $\lambda_i=i$.    
\end{definition}
Blecher and Knopfmacher studied fixed points both for partitions as defined above (where a partition can have at most one fixed point) and for partitions where the entries are written in nondecreasing order (where there can be multiple fixed points). We will focus on the first case.

Hopkins and Sellers 
\cite{hopkins2023blecher}
defined a generalization of partition fixed points.  
\begin{definition}
\label{def:partitionkfixedpoint}
Given an integer $h$, a partition $\lambda$ has an $h$-\emph{fixed point} if there exists $i$ with $\lambda_i=i+h$.  
\end{definition}
See also Hopkins's combinatorial refinement which distinguishes partition fixed points by which part is fixed
\cite{hopkins23bk2}.

Given $\lambda \vdash n$, there is an important nonincreasing sequence of integers associated to $\lambda$, namely the \textit{first-column hook lengths}, also known as a \emph{sequence of} $\beta$-\emph{numbers} \cite{jamescombinatorialresults}
, which we describe now. These play a critical role, for example, in the representation theory of symmetric groups.

Let $\lambda=(\lambda_1, \lambda_2, \ldots, \lambda_t)$ be a partition of $n$ with corresponding Young diagram $[\lambda]$. Let $\lambda'$ denote the conjugate or transpose partition, defined by $\lambda^\prime_i = \#\{\lambda_j \leq i \}$. For a box $(i,j) \in [\lambda]$, recall that the $i,j$-\emph{hook length} is defined as:
$$h_{i,j}(\lambda) =\lambda_i + \lambda'_j-i-j+1.$$ 
\begin{definition}
The \emph{sequence of first column hook lengths} is given by $\{h_{1,1}(\lambda), h_{2,1}(\lambda), \ldots, h_{t,1}(\lambda)\}$. 

\end{definition}
Notice that the partition $\lambda$ can easily be recovered from the corresponding sequence of first column hook lengths. It turns out that studying fixed points in this sequence leads to very interesting combinatorics!

We define a version of fixed hooks following Definition \ref{def:partitionkfixedpoint} for partitions:
\begin{definition} For $h \in \mathbb{Z}$, say that $\lambda$ has an \emph{$h$-fixed hook} if there is some $i \geq 1$ such that $h_{i,1}(\lambda)=i+h$. 
\end{definition}

See Figure \ref{S1example} for an example of the various definitions.  It shows the Young diagram of the partition $(2,2,1)$ with hook lengths in each box.  The first column hook lengths are $\{4,2,1\}$.  Note that the partition has a fixed hook $h_{2,1}$, a 3-fixed hook $h_{1,1}$, and a $-2$-fixed hook $h_{3,1}$.

\begin{figure}
\young(42,21,1)
\caption{The Young diagram of $(2,2,1)$ with hook lengths.}
\label{S1example}
\end{figure}

The sequence of first-column hook lengths is strictly decreasing, so if $\lambda$ has an $h$-fixed hook for a given $h$, then it is unique. We will be interested in counting partitions with $h$-fixed hooks for various $h$ both positive and negative.

In the sequel we make heavy use of the following standard notation: 
\begin{gather*}
(a;q)_\infty = \prod_{k=0}^\infty (1-a q^k),  \; (a;q)_n = \frac{(aq^n;q)_\infty}{(a;q)_\infty} , \\   
(q)_x = (q;q)_x, \; \left[ {A \atop B} \right]_q = \frac{(q)_A}{(q)_B(q)_{A-B}}.
\end{gather*}
These have the following combinatorial interpretations (see Andrews \cite{AndrewsBook}): $\left[ {{M+N} \atop N} \right]_q$ is the generating function for partitions with at most $N$ parts and largest part at most $M$, and $\frac{1}{\prod_{i \in S} (1-q^i)}$ is the generating function for partitions with parts in $S$, e.g., $\frac{1}{(q)_n}$ is the generating function for partitions with parts at most $n$.

\section{Fixed hooks}

In this section we consider partitions $\lambda$ with 0-fixed hooks $h_{i,1}(\lambda)=i$, which when clear from context we will simply call fixed hooks.

The main result of this section is:

\begin{thm}
  \label{thm: main counting result}

The number $f(n)$ of partitions $\lambda \vdash n$ having a fixed hook is equal to the sum over all partitions of n of the number of distinct parts $i$ of multiplicity $i$.

\end{thm}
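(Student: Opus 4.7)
The plan is to verify the identity by computing both sides as generating functions in $q$ and showing they agree. First I would rewrite the fixed-hook condition concretely. Since $\lambda'_1 = t$, the number of parts of $\lambda$, the hook formula gives $h_{i,1}(\lambda) = \lambda_i + t - i$, so $h_{i,1}(\lambda) = i$ is equivalent to $\lambda_i = 2i - t$. Setting $m := \lambda_i \geq 1$ and $k := t - i \geq 0$ produces the convenient parametrization $i = m + k$. Because the first-column hook sequence is strictly decreasing, such an index $i$ is unique when it exists, so $f(n)$ is exactly the number of pairs $(\lambda, i)$ with a fixed hook at position $i$.

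Next I would compute $\sum_{n \geq 0} f(n) q^n$ by decomposing $\lambda$ into three pieces relative to row $i$: the $i-1$ rows above (each of length $\geq m$), row $i$ itself (of length $m$), and the $k$ rows below (each of length in $\{1,\dots,m\}$). After subtracting the natural baselines, the upper piece becomes a partition with at most $i-1$ parts, contributing $q^{m(i-1)}/(q)_{i-1}$, and the lower piece becomes a partition fitting in a $k\times(m-1)$ box, contributing $q^k \left[ {m+k-1 \atop k} \right]_q$. Multiplying these together with the $q^m$ from row $i$, cancelling the $(q)_{i-1}$ against the numerator of the Gaussian coefficient, and rewriting the exponent yields
\begin{equation*}
\sum_{n \geq 0} f(n) q^n = \sum_{m \geq 1,\,k \geq 0} \frac{q^{m^2 + k(m+1)}}{(q)_k\,(q)_{m-1}}.
\end{equation*}

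Summing over $k$ via Euler's identity $\sum_{k \geq 0} z^k / (q)_k = 1/(z;q)_\infty$ (with $z = q^{m+1}$), and using the factorization $(q)_{m-1}\,(q^{m+1};q)_\infty = (q;q)_\infty/(1 - q^m)$, the double sum collapses to $\frac{1}{(q;q)_\infty}\sum_{m \geq 1}(q^{m^2} - q^{m^2+m})$. On the other side, a partition $\mu$ in which some part $j$ occurs exactly $j$ times contributes $q^{j^2}$ (from the $j$'s) times the generating function $(1 - q^j)/(q;q)_\infty$ for partitions with no part equal to $j$. Summing over $j \geq 1$ produces precisely the same series, and matching coefficients of $q^n$ gives the theorem.

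The main technical obstacle will be the three-piece decomposition: one must take care with the boundary case $k = 0$ (where $t = i$ and the lower piece is empty) and combine the Gaussian coefficient with the upper-piece generating function carefully to obtain the clean form $1/\bigl((q)_k(q)_{m-1}\bigr)$. A direct bijective proof sending $(\lambda, i) \mapsto (\mu, \lambda_i)$ is tempting --- one can check on small examples that $j = \lambda_i$ is the right choice --- but partitioning the rows above and below row $i$ into parts of $\mu$ larger and smaller than $j$ appears to require a nontrivial rearrangement, so the generating-function route should be the cleanest path.
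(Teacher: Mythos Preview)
Your proposal is correct and follows essentially the same route as the paper's generating-function proof: the three-piece decomposition of $\lambda$ around the fixed-hook row, the cancellation of the Gaussian binomial against $(q)_{i-1}$, the Euler summation $\sum_{k\ge 0} q^{(m+1)k}/(q)_k = 1/(q^{m+1};q)_\infty$, and the final form $\frac{1}{(q)_\infty}\sum_{m\ge 1}(q^{m^2}-q^{m^2+m})$ all appear in the paper, with your variables $(m,k)$ corresponding exactly to the paper's $(T+1,j)$ after its substitution $T=k-2j-1$. Your parametrization by $m=\lambda_i$ and $k=t-i$ is in fact a bit more direct than the paper's, which first indexes by the total number of parts and the leg length and only afterward changes variables, but the argument is otherwise the same.
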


The sequence $\{f(n)\}$  in Theorem \ref{thm: main counting result} is A276428 in the Online Encyclopedia of Integer Sequences \cite{oeis}.

We will give a short generating function proof, and then a more detailed combinatorial proof which illuminates additional features of the correspondence.  Readers may note an interesting feature of the bijective proof is that a partition may have at most one fixed hook, whereas several part sizes may be equal to their multiplicity, as in $(3,2,2,1)$. So Theorem \ref{thm: main counting result} is not an equality between two sets of partitions.

\begin{proof}[Generating function proof]
    Let $\lambda$ have $k$ parts.  A fixed hook occurs at part $\lambda_{k-j}$ if $\lambda_{k-j} +j = k-j$, so $\lambda_{k-j} = k-2j$.  There are $j$ further parts of sizes between 1 and $k-2j$, generated by $q^j \left[ {k-j-1} \atop j \right]_q$.  There are $k-j-1$ prior parts of size at least $k-2j$, generated by $q^{(k-j-1)(k-2j)} \frac{1}{(q)_{k-j-1}}$.  Summing over all $k$ and $j$, the generating function $\sum  f(n)q^n$ for fixed hooks is

$$\sum_{k=1}^\infty \sum_{j=0}^\infty q^{k^2 - 3kj + 2j^2 +j} \frac{1}{(q)_{k-2j-1} (q)_j}.$$

Setting $T = k - 2j - 1$, this sum becomes

$$\sum_{T=0}^\infty \sum_{j=0}^\infty \frac{q^{(T+1)^2 + (T+2)j}}{(q)_T (q)_j} = \sum_{T=0}^\infty \frac{q^{(T+1)^2}}{(q)_T} \sum_{j=0}^\infty \frac{q^{(T+2)j}}{(q)_j}.$$

Using the identity $\sum_{j=0}^\infty \frac{q^{(T+2)j}}{(q)_j} = \frac{1}{(q^{T+2};q)_\infty}$ (combinatorially, each side describes partitions with parts of size at least $T+2$), the generating function becomes

$$\sum_{T=0}^\infty \frac{q^{(T+1)^2}}{(q)_T(q^{T+2};q)_\infty} = \sum_{T=0}^\infty \frac{q^{(T+1)^2} (1-q^{T+1})}{(q)_\infty}.$$

This is the generating function counting the number of times in all partitions of $n$ that part size $i$ appears with exactly multiplicity $i$.
\end{proof}

For a bijective proof of Theorem \ref{thm: main counting result} we use an interesting bijection found on the Mathematics Stack Exchange 
\cite{OverflowBij}.
Let $P_a$ denote the set of all partitions into at most $a$ parts. Let $R_{a,b}$ be all partitions that fit into a rectangle with $a$ rows and $b$ columns.
\begin{prop}
  \label{prop: Splutterwitbijection}  There is a bijection:

  $$F_{a,b}: P_a \times P_b \rightarrow P_{a+b} \times R_{b,a}$$
  which preserves the total weight of the two partitions on each side.
\end{prop}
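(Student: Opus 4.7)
The plan is to construct $F_{a,b}$ explicitly and verify it is a weight-preserving bijection. Equinumerosity at each weight is automatic from the identity
\[
\sum_{(\mu, \nu) \in P_a \times P_b} q^{|\mu| + |\nu|} = \frac{1}{(q)_a (q)_b} = \frac{1}{(q)_{a+b}} \left[ {a+b \atop a} \right]_q = \sum_{(\pi, \rho) \in P_{a+b} \times R_{b,a}} q^{|\pi| + |\rho|},
\]
which is simply the definition of the $q$-binomial coefficient combined with the combinatorial interpretations recalled in the introduction. So the content of the proposition is the explicit bijection.

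My approach is a merge-and-shift construction. Given $(\mu, \nu)$, pad to full lengths $a$ and $b$ with zeros, then merge the combined $a+b$ entries into a weakly decreasing sequence $\sigma = (\sigma_1 \geq \cdots \geq \sigma_{a+b})$, recording a label word $w \in \{M, N\}^{a+b}$ that tracks the origin of each entry under a fixed tie-breaking convention. From $(\sigma, w)$ I produce $(\pi, \rho)$ as follows: $\rho \in R_{b,a}$ encodes $w$ via the classical lattice-path bijection underlying $\left[ {a+b \atop a} \right]_q$ (read $M$s as east steps and $N$s as north steps in a $b \times a$ grid, and let $\rho$ be the partition bounded by the path), while $\pi \in P_{a+b}$ is $\sigma$ adjusted by a correction $c_i$ at each position that depends only on $w$. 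Weight preservation then reduces to showing that $\sum_i c_i = |\rho|$, and invertibility is routine: $\rho$ recovers $w$, the pair $(\pi, w)$ recovers $\sigma$ by adding back the correction, and $(\sigma, w)$ splits back into $(\mu, \nu)$.

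The main obstacle is calibrating the correction so that $\pi$ always lies in $P_{a+b}$, because the correction formula and the tie-breaking convention interact nontrivially: naive choices (such as subtracting at each position the number of $M$-labels appearing strictly later) work when $\sigma$ is strictly decreasing but can produce out-of-order or negative entries in the presence of ties. A cleaner alternative I would fall back on if the direct approach becomes tangled is to work with strict $\beta$-sequences: replace $\mu$ and $\nu$ by $\mu^\sharp = (\mu_i + a - i)_{i=1}^a$ and $\nu^\sharp = (\nu_j + b - j)_{j=1}^b$, resolve coincidences in the multiset union by iteratively shifting duplicate $\nu^\sharp$-values upward to the smallest unused position, and recover $\pi$ by subtracting the length-$(a+b)$ staircase $(a+b-1, a+b-2, \ldots, 0)$ from the sorted strict merged sequence; here $\rho$ encodes the positions of $\mu^\sharp$- versus (shifted) $\nu^\sharp$-values in the combined sorted sequence via the same lattice-path bijection. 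In either formulation, the delicate point is matching the correction or shift procedure with the encoding of $\rho$ so that $|\pi| + |\rho| = |\mu| + |\nu|$ holds identically.
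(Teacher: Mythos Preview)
Your route is different from the paper's, and as you yourself flag, it is not carried through. The paper constructs $F_{a,b}$ by an explicit iterative \emph{insertion}: to insert a single value $R$ into $\lambda=(\lambda_1,\ldots,\lambda_t)$, let $R$ slide upward past $\lambda_t,\lambda_{t-1},\ldots$, decrementing $R$ by one at each step, stopping at the least $s$ with $R-s\le\lambda_{t-s}$; record the new partition $(\lambda_1,\ldots,\lambda_{t-s},R-s,\lambda_{t-s+1},\ldots,\lambda_t)$ together with the slide count $s$. One then inserts $\nu_1,\ldots,\nu_b$ in turn into $\mu$; the resulting partition lies in $P_{a+b}$ and the slide counts $(s_1,\ldots,s_b)$ form the element of $R_{b,a}$. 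Because each single insertion is manifestly reversible once $a$ and $b$ are fixed, the whole map is a bijection with no further analysis needed.

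Your merge-then-correct scheme is a legitimate alternative in spirit, and when completed it is essentially equivalent to the above (in the paper's map the decrement applied to each $\nu$-entry equals the number of $\mu$-entries lying strictly below it in the \emph{output}, and $\rho$ records exactly those numbers). But the proposal as written has a genuine gap precisely where you say it does: you never fix the tie-breaking rule or the correction formula, and your own warning that the naive choice ``can produce out-of-order or negative entries in the presence of ties'' is correct. Sorting $\mu\cup\nu$ first and then subtracting position-dependent shifts does not commute with the reordering those shifts induce, so the label word $w$ of the naive merge is not the label word of the final $\pi$; this is exactly why the paper interleaves placement and decrementing rather than separating them. Your $\beta$-sequence fallback is also workable but is likewise only sketched. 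In short, the plan is sound but the proof is not done; the paper's insertion description both avoids the calibration problem you identify and supplies the explicit map needed downstream in the bijective proof of Theorem~\ref{thm: main counting result}.
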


Note that this bijection realizes the identity:
\begin{equation*}\label{eq: qbinomialidentity}
  \frac{1}{(q)_a} \cdot \frac{1}{(q)_b} = \frac{1}{(q)_{a+b}}  
  \left[ \begin{array}{c}
    a+b \\
a
\end{array} \right]_q.
\end{equation*}
\begin{proof}
Given a partition $\lambda=(\lambda_1, \lambda_2, \ldots, \lambda_t)$ and a positive integer $R$ we describe how to ``insert" $R$ into $\lambda$. If $R<\lambda_t$ then simply place it at the end of $\lambda$. If not then let it ``slide past" $\lambda_t$ while subtracting one from $R$. Now compare $R-1$ to $\lambda_{t-1}$ and repeat until you reach the smallest $s$ with $R-s \leq \lambda_{t-s}$, so there are no further slides. Record the new partition $\tilde{\lambda}=(\lambda_1, \lambda_2, \ldots, \lambda_{t-s},R-s,\lambda_{t-s+1}, \ldots, \lambda_t)$ together with the integer $s \leq t$ counting the number of slides.

To define $F_{a,b}(\lambda, \mu)$, successively insert $\mu_1, \mu_2, \ldots, \mu_b$ into $\lambda$ as above. The resulting partition has at most $a+b$ parts. Keep track of the sequence of $s$ values, which will have at most $b$ parts, each at most $a$. It is easy to visualize this writing $\mu$ atop $\lambda$ in the French notation as in Figure \ref{fig:insertion}, which demonstrates for $a=4, b=3$, that
\begin{equation*}
    F_{4,3}((7,5,3,2),(9,7,5))=((7,6,5,5,3,3,2),(3,2,2)).
\end{equation*}

  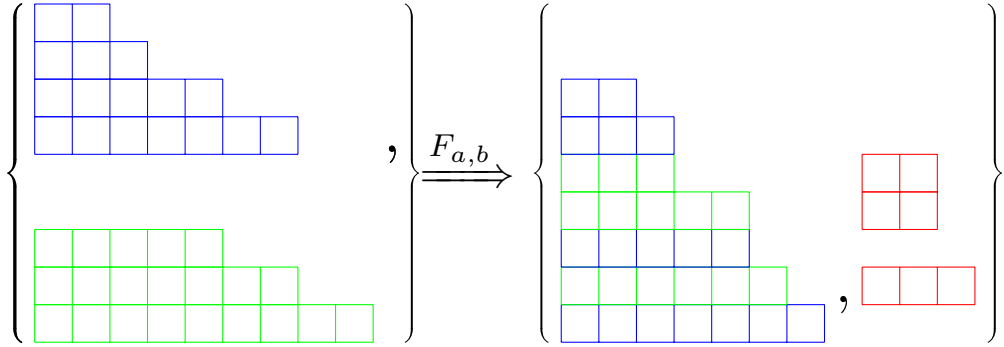
\begin{figure}
\centering

   \begin{tikzpicture}[scale=0.5]

   \draw [black,
    decorate,
    decoration = {calligraphic brace,
        raise=5pt,
        amplitude=5pt,
        aspect=0.5},line width=0.75pt] (9.5,9) --  (9.5,0);
        
          \draw [black,
    decorate,
    decoration = {calligraphic brace,
        raise=5pt,
        amplitude=5pt,
        aspect=0.5},line width=0.75pt] (0,0) --  (0,9);

        \draw (9.5,5.0) node[scale=2.0, black] {,};

         \draw [black,
    decorate,
    decoration = {calligraphic brace,
        raise=5pt,
        amplitude=5pt,
        aspect=0.5},line width=0.75pt] (25,9) --  (25,0);

        \draw [black,
    decorate,
    decoration = {calligraphic brace,
        raise=5pt,
        amplitude=5pt,
        aspect=0.5},line width=0.75pt] (14,0) --  (14,9);

          \draw [black,
    decorate,
    decoration = {calligraphic brace,
        raise=5pt,
        amplitude=5pt,
        aspect=0.5},line width=1.25pt] (0,0) --  (0,9);

        \draw (9.5,5.0) node[scale=2.0, black] {,};
          \draw (21.5,1.0) node[scale=2.0, black] {,};

\draw [blue](0,5) grid (7,6);
\draw [blue](0,6) grid (5,7);
\draw [blue](0,7) grid (3,8);
\draw [blue](0,8) grid (2,9);

\draw [green](0,0) grid (9,1);
\draw [green](0,1) grid (7,2);
\draw [green](0,2) grid (5,3);

\draw [blue](14,0) grid (21,1);
\draw [green](14,1) grid (20,2);
\draw [red](22,3) grid (24,4);
\draw [blue](14,2) grid (19,3);
\draw [green](14,3) grid (19,4);

\draw [green](14,4) grid (17,5);
\draw [blue](14,5) grid (17,6);
\draw [blue](14,6) grid (16,7);
\draw [red](22,1) grid (25,2);
\draw [red](22,4) grid (24,5);
 \draw (11.5,4.5) node[scale=1.5, black] {$\xRightarrow{F_{a,b}}$};

    \end{tikzpicture}

\caption{The bijection $F_{a,b}$ for $a=4$, $b=3$.}
  \label{fig:insertion}

\end{figure}
Notice that given the pair of partitions on the right side of Figure \ref{fig:insertion}, \textit{together with the values of $a$ and $b$}, it is easy to recover $\lambda$ and $\mu$, so we have a bijection as desired. For example, the second part 2 in $(3,2,2)$ tells us the last green part slid two steps into $\lambda$. It is easy to reverse the two slides and recover the part size 5 in $\mu$. Notice if $b$ were four with the same image pair we would consider $(3,2,2,0)$ and the inverse map is not the same.
\end{proof}

We now give a bijective proof of Theorem \ref{thm: main counting result}.
\begin{proof}[Bijective proof]
  The input to the bijective proof is an ordered pair $(\lambda, i)$ so that $\lambda \vdash n$ and $i$ appears with multiplicity $i$ in $\lambda$. The output $\mathcal{B}(\lambda, i)$ is a partition $\mu \vdash n$ with a fixed hook. The bijection is such that if  $\mu=\mathcal{B}(\lambda,i)$  has $h_{1,r}(\mu)=r$, then $\mu_r=i$. Thus we are really giving a sequence of bijections, one for each $i$, between partitions with parts $i$ of multiplicity $i$ and partitions with fixed hooks corresponding to a part of length $i$.

  Given $(\lambda,i)$, assume that the $i$ identical parts $i$ start in row $k$ for some $k \geq 1$. Thus $\lambda_{k-1}>i$, $\lambda_{k+i}<i$ and $\lambda_{k}=\lambda_{k+1}= \cdots = \lambda_{k+i-1}=i$.

  The Young diagram of $\lambda$ is shown in Figure \ref{fig: lambda with ii}.

  \begin{figure}

\centering

   \begin{tikzpicture}[scale=0.5]

           \fill[cyan, opacity=0.2] (0,0) rectangle (3,3);
            \fill[blue, opacity=0.2] (0,3) rectangle (3,6);
      
        \draw (-4,3) node[scale=1.0, black] {$[\lambda]=$};
         \draw (5,4.8) node[scale=1, blue] {$\tau$};
          \draw (.5,-1) node[scale=1, red] {$\epsilon$};
          \draw (6,1) node[scale=1, blue] {$\tau \in P_{k-1}$};
            \draw (6,-1) node[scale=1, red] {$\epsilon' \in P_{i-1}$};

        \draw [thick] (0,0) -- (0,6);
        \draw [thick] (3,6) -- (3,0);
        \draw [thick] (3,0) -- (0,0);
        \draw[thick] (0,3)--(0,6);
        \draw[thick] (0,6)--(4,6);
        \draw[thick] (3,3)--(4,3);
        \draw[thick] (4,3)--(4,6);
        \draw[blue][thick] (4,6)--(7,6);
        \draw[red][thick] (0,0)--(0,-3);
        \draw[blue][thick] (4,3)--(4,6);

    \draw [red,
    decorate,
    decoration = {calligraphic brace,
        raise=5pt,
        amplitude=5pt,
        aspect=0.5},line width=1.25pt] (0,3) --  (0,6)
node[pos=0.5,left=10pt,black, scale=1]{$k-1$};

  \draw [red,
    decorate,
    decoration = {calligraphic brace,
        raise=5pt,
        amplitude=5pt,
        aspect=0.5},line width=1.25pt] (0,0) --  (0,3)
node[pos=0.5,left=10pt,black, scale=1]{$i$};

   \draw [red,
    decorate,
    decoration = {calligraphic brace,
        raise=5pt,
        amplitude=5pt,
        aspect=0.5},line width=1.25pt] (0,6) --  (3,6)
node[pos=0.5,above=10pt,black, scale=1]{$i$};

       \draw[decorate,decoration={coil,aspect=0}, blue, thick]   (7,6)--(4,3);
       \draw[decorate,decoration={coil,aspect=0}, red, thick]   (0,-3)--(2,0);

       \foreach \i in {0,...,2}
         \draw (3.5,5.5-\i) node[scale=1][blue] {X};
               \foreach \i in {0,...,2}
             \draw (.5+\i,.5) node[scale=1][blue] {X};

    \end{tikzpicture}

\caption{Partition $\lambda$ with $i$ copies of $i$ starting in row $k$.}
  \label{fig: lambda with ii}
  \end{figure}

Next suppose that $\mu \vdash n$ has $h_{k+i-1}(\mu)=k+i-1$ and $\mu_{k+i-1}=i$. These two conditions immediately force $\mu$ to have exactly $i+2k-2$ nonzero parts, so its Young diagram is as shown in Figure \ref{fig:muwithHL}.

  \begin{figure}

\centering

   \begin{tikzpicture}[scale=0.5]

        \fill[blue, opacity=0.2] (0,3) rectangle (3,6);
        \fill[cyan, opacity=0.2] (0,0) rectangle (3,3);
            
       \draw (-4,3) node[scale=1.0, black] {$[\mu]=$};

         \draw [thick] (0,0) -- (0,6);
         \draw [thick] (0,1) -- (3,1);
         \draw [thick] (3,6) -- (3,0);
         \draw [thick] (3,0) -- (0,0);
        \draw[thick] (0,3)--(0,6);
        \draw[thick] (0,6)--(3,6);
        \draw[blue][thick] (3,6)--(7,6);
        \draw[blue][thick] (0,0)--(0,-3);
        \draw[blue][thick] (0,-3)--(1,-3);
        \draw[blue][thick] (1,0)--(1,-3);
        \draw[red][thick] (1,-3)--(3,-3);
      \draw[red][thick] (3,0)--(3,-3);

       \foreach \i in {0,...,2}
         \draw (.5,-.5-\i) node[scale=1][blue] {X};
            \foreach \i in {0,...,2}
         \draw (.5+\i,.5) node[scale=1][blue] {X};

    \draw [red,
    decorate,
    decoration = {calligraphic brace,
        raise=5pt,
        amplitude=5pt,
        aspect=0.5},line width=1.25pt] (0,3) --  (0,6)
node[pos=0.5,left=10pt,black, scale=1]{$k-1$};

  \draw [red,
    decorate,
    decoration = {calligraphic brace,
        raise=5pt,
        amplitude=5pt,
        aspect=0.5},line width=1.25pt] (0,0) --  (0,1)
node[pos=0.5,left=10pt,black, scale=1]{$1$};

  \draw [red,
    decorate,
    decoration = {calligraphic brace,
        raise=5pt,
        amplitude=5pt,
        aspect=0.5},line width=1.25pt] (0,1) --  (0,3)
node[pos=0.5,left=10pt,black, scale=1]{$i-1$};

   \draw [red,
    decorate,
    decoration = {calligraphic brace,
        raise=5pt,
        amplitude=5pt,
        aspect=0.5},line width=1.25pt] (0,6) --  (3,6)
node[pos=0.5,above=10pt,black, scale=1]{$i$};

       \draw[decorate,decoration={coil,aspect=0}, blue, thick]   (7,6)--(3,1);
        \draw (4,4) node[scale=1, blue] {$\gamma$};

      \draw (2,-1.5) node[scale=1, red] {$\rho$};

       \draw [blue,
    decorate,
    decoration = {calligraphic brace,
        raise=5pt,
        amplitude=5pt,
        aspect=0.5},line width=1.25pt] (0,-3) --  (0,0)
node[pos=0.5,left=10pt,black, scale=1]{$k-1$};
\draw (6,1) node[scale=1, blue] {$\gamma \in P_{i+k-2}$};
            \draw (6,-1) node[scale=1, red] {$\rho \in R_{k-1, i-1}$};

    \end{tikzpicture}

\caption{ $\mu $ with $h_{k+i-1}(\mu)=k+i-1$ and $\mu_{k+i-1}=i$.}
  \label{fig:muwithHL}

\end{figure}

In Figure \ref{fig: lambda with ii}, $\tau$ is an arbitrary partition in $P_{k-1}$, and $\epsilon$ is an arbitrary partition with all parts at most $i-1$ (equivalently, $\epsilon ' \in P_{i-1}$). In Figure \ref{fig:muwithHL}, $\gamma$ is an arbitrary partition in $P_{k+i-2}$ and $\rho$ is an arbitrary partition in $R_{k-1,i-1}$.

Now the bijection is easy to define. Define $\mu=\mathcal{B}(\lambda, i)$ by starting with $\lambda$. First move the $k-1$ blue boxes down from $\{(1,i+1), (2,i+1), \ldots, (k-1,i+1)\}$ to $\{(i+k, 1), (i+k+1,1), \ldots, (i+2k-2,1)\}$. Then construct $\mu$ by setting:
$$(\gamma,\rho):=F_{k-1, i-1}(\tau, \epsilon').$$ 

The inverse is easily obtained by first performing the inverse of $F$ and then moving the blue boxes up. 
\end{proof}

Here is an example of the bijection.

\begin{exm}
  Let $\lambda=(16,12,8,8,7,5,5,5,5,5,4,4,3,3,3,2)\vdash 95$ and $i=5$. Then $k=6$, $\tau=(10,6,2,2,1)$ and ${\color{red} \epsilon'=(6,6,5,2)}$. We calculate that:

  $$F_{5,4}(\tau, \epsilon')=\{(10, 6, {\color{red} 3,3,2}, 2, 2, {\color{red}{1}}, 1), ({\color{red}3, 3, 3, 1})\}$$

The red encodes how $\epsilon '$ ``sinks" into $\tau$ under the map of Proposition \ref{prop: Splutterwitbijection} as in Figure \ref{fig:insertion}. Thus  $\mu=\mathcal{B}(\lambda, 5)=(15,11,8,8,7,7,7,6,6,5,4,4,4,2,1) \vdash 95.$ Notice that $h_{10,1}(\mu)=10$ as desired with $\mu_{10}=5$ as guaranteed by Proposition \ref{prop: fixedpointresultbyi} below.

\end{exm}

The following strengthening of Theorem \ref{thm: main counting result} is clear from the bijection:

\begin{prop}
    \label{prop: fixedpointresultbyi}
The number of partitions $\lambda \vdash n$ which contain the part $i$ with multiplicity $i$ is the same as the number of partitions $\mu \vdash n$ for which there is a fixed hook $h_{s,1}=s$ with $\mu_s=i$.
\end{prop}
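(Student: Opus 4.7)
The plan is to confirm that the bijection $\mathcal{B}$ from the proof of Theorem~\ref{thm: main counting result}, when indexed by $i$, already yields the refined count: for each fixed $i \geq 1$, the map $\lambda \mapsto \mathcal{B}(\lambda, i)$ will be shown to be a bijection from partitions of $n$ in which $i$ appears with multiplicity $i$ onto partitions $\mu \vdash n$ whose fixed-hook position $s$ satisfies $\mu_s = i$.

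First I would verify that $\mathcal{B}(\lambda, i)$ lands in the claimed set. Let $k$ be the starting row of the $i$ copies of $i$ in $\lambda$. The image $\gamma$ from $F_{k-1, i-1}$ has at most $k+i-2$ parts, so in the resulting $\mu$ the row at position $k+i-1$ has length exactly $i$ (it receives no contribution from $\gamma$, and lies above the relocated column of $k-1$ boxes). Combined with the resulting count $\mu'_1 = i + 2k - 2$, the hook formula gives
\[
h_{k+i-1, 1}(\mu) = \mu_{k+i-1} + \mu'_1 - (k+i-1) = i + (i+2k-2) - (k+i-1) = k+i-1,
\]
so $s := k+i-1$ is the fixed-hook position and $\mu_s = i$, as required.

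Next I would establish the inverse map for each fixed $i$. Given $\mu \vdash n$ with $h_{s, 1}(\mu) = s$ and $\mu_s = i$, set $k := s - i + 1$, which forces $\mu'_1 = i + 2k - 2$. The shape of $\mu$ then decomposes into the length-$i$ row at position $s$, a partition $\gamma$ recording the excess beyond column $i$ in rows $1, \ldots, s-1$, and a partition $\rho$ recording the excess beyond column $1$ in the $k-1$ rows below position $s$. Applying $F^{-1}_{k-1, i-1}$ to $(\gamma, \rho)$ returns $(\tau, \epsilon') \in P_{k-1} \times P_{i-1}$; moving the recovered column of $k-1$ boxes from column $1$ back up to column $i+1$ of rows $1, \ldots, k-1$ reconstructs $\lambda$, which by design contains $i$ copies of $i$ beginning at row $k$. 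Each step is uniquely reversible, so $\mathcal{B}(\cdot, i)$ is the desired bijection.

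The main obstacle is bookkeeping rather than any genuine difficulty: one must confirm that the data $(s, \mu_s)$ unambiguously determines $k = s - i + 1$ and hence the parameters of $F^{-1}_{k-1, i-1}$, and that the decomposition $(\gamma, \rho)$ extracted from $\mu$ falls into the correct domain of the inverse insertion map (the same warning that appears at the end of the proof of Proposition~\ref{prop: Splutterwitbijection}). Once these conventions are pinned down, the refined statement follows at once from the construction used to prove Theorem~\ref{thm: main counting result}.
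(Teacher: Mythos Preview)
Your proposal is correct and follows exactly the paper's approach: the paper simply asserts that the proposition ``is clear from the bijection'' $\mathcal{B}$ constructed in the bijective proof of Theorem~\ref{thm: main counting result}, and you have supplied the routine verification that $\mathcal{B}(\cdot,i)$ lands in and surjects onto the set of $\mu$ with fixed hook at $s=k+i-1$ and $\mu_s=i$. Your hook-length computation and recovery of $k=s-i+1$ from $(s,\mu_s)$ are precisely the details the paper leaves implicit.
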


Table \ref{tableProp33} illustrates Proposition \ref{prop: fixedpointresultbyi} and the corresponding bijection for $n=9$. On the left we see 12 occurrences of $i$ with multiplicity $i$, seven for $i=1$, four for $i=2$ and one for $i=3$. Notice that the partition $(4,2,2,1)$ appears twice. On the right we have the twelve partitions with fixed hooks, where the part $\lambda_i$ with $h_{i,1}(\lambda)=i$ is highlighted. Notice that the highlighted part sizes occur with multiplicities seven, four, and one as predicted.

\begin{table}[h]
    \centering
    \caption{Proposition \ref{prop: fixedpointresultbyi} for $n=9$.}
    \begin{tabular}{|l|l|}
        \hline
        $\lambda \vdash 9$ with parts $i$ of multiplicity $i$ & $\mathcal{B}(\lambda,i) \vdash 9$ with fixed hooks \\
        \hline
        (8,\textcolor{red}{1}) &  (7,\textcolor{red}{1},1) \\
        (6,2,\textcolor{red}{1}) &(5,1,\textcolor{red}{1},1,1)\\
        (5,3,\textcolor{red}{1}) & (4,2,\textcolor{red}{1},1,1)  \\
        (5,\textcolor{red}{2,2}) & (4,2,\textcolor{red}{2},1) \\
        (4,4,\textcolor{red}{1}) &(3,3,\textcolor{red}{1},1,1) \\
        (4,\textcolor{red}{2,2},1) &  (3,2,\textcolor{red}{2},2)  \\
        (4,2,2,\textcolor{red}{1})&(3,1,1,\textcolor{red}{1},1,1,1)\\
        (\textcolor{red}{3,3,3}) & (3,3,\textcolor{red}{3})  \\
        (3,3,2,\textcolor{red}{1}) &  (2,2,1,\textcolor{red}{1},1,1,1)\\
        (3,\textcolor{red}{2,2},1,1) &(3,3,\textcolor{red}{2},1)\\
        (2,2,2,2,\textcolor{red}{1}) & (1,1,1,1,\textcolor{red}{1},1,1,1,1) \\
        (\textcolor{red}{2,2},1,1,1,1,1)& (7,\textcolor{red}{2})  \\
     
        \hline
    \end{tabular}
    \label{tableProp33}
\end{table}

\section{Fixing part sizes}

The final result of the previous section, illustrated in Table \ref{tableProp33}, suggests that it would be interesting to specify the part sizes corresponding to fixed hooks.  In this section we do so, and find a close connection with an investigation by Andrews and Merca \cite{AndrewsMercaTruncatedPentagonal} on the truncated pentagonal number theorem.  We give brief summary of this result.

The generating function for the partition number can be written $$\sum_{n=0}^\infty p(n) q^n = \frac{1}{(q)_\infty} = \frac{1}{1-q-q^2+q^5+q^7-q^{12}-q^{15}+\dots},$$ where the latter denominator is described by the \emph{pentagonal numbers}, $$(q)_\infty = \sum_{n \in \mathbb{Z}} (-1)^n q^{\frac{n}{2}(3n-1)}.$$  The relation $$\frac{(q)_\infty}{(q)_\infty} = 1$$ now yields the \emph{pentagonal number recurrence} $p(n) = 0$ for $n<0$, $p(0)=1$, and for $n>0$, $$p(n)=p(n-1) + p(n-2) - p(n-5) - p(n-7) + p(n-12) + \cdots .$$

The difference $p(n)-p(n-1)$ can be seen to be the number of partitions of $n$ without 1s, by noting that a 1 can be appended to all partitions of $n-1$.  It is less obvious what truncations of the above recurrence, such as $$p(n)-p(n-1)-p(n-2)+p(n-5),$$ might describe, if anything.  Andrews and Merca \cite{AndrewsMercaTruncatedPentagonal} showed that the truncation after $2k$ terms is $(-1)^{k+1} M_k (n)$, where $M_k(n)$ is the number of partitions of $n$ in which the smallest part size not appearing (known as the \emph{minimal excludant} or \emph{mex} of the partition) is $k$, and the number of parts larger than $k$ is greater than the number of parts smaller than $k$.  They showed that this quantity has generating function $${\mathcal{M}}_k = \sum_{n=0}^\infty M_k(n) q^n = \sum_{n=k}^\infty \frac{q^{\binom{k}{2}+(k+1)n}}{(q)_n} \left[ {{n-1} \atop {k-1}} \right]_q.$$

 Recall that an $h$-\emph{fixed hook} is a hook $h_{s,1}(\lambda)=s+h$. A useful tool for the results in this section will be the generating function for the number of partitions of $n$ with an $h$-fixed hook where the part indexed in the place of the hook is size $k$.

\begin{thm}\label{hfixsizek} The generating function for the number of partitions of $n$ with an $h$-fixed hook arising from a part of size $k$ is \begin{multline*}\sum_{s=k-h}^\infty q^{(k+1)(s-1)+h+1} \left[{{s+h-1} \atop {k-1}} \right]_q \frac{1}{(q)_{s-1}} \\ = q^{h+1-\binom{k}{2}} \sum_{s=k-h}^\infty q^{(k+1)(s-1)+\binom{k}{2}} \left[ {{s+h-1} \atop {k-1}} \right]_q \frac{1}{(q)_{s-1}}.\end{multline*}
\end{thm}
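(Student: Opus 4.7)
The plan is to enumerate such partitions directly by slicing the Young diagram into three horizontal bands dictated by the fixed-hook condition, and then counting each band with a standard $q$-series.

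First I would translate the twin conditions $\lambda_s = k$ and $h_{s,1}(\lambda) = s + h$ into a determination of the total number of parts $t = \lambda'_1$. Specializing the hook formula to $j=1$ gives $h_{s,1}(\lambda) = \lambda_s + \lambda'_1 - s$, so the two conditions force $t = 2s + h - k$. Consequently $\lambda$ decomposes uniquely as (a) an upper band of $s-1$ parts $\lambda_1 \geq \cdots \geq \lambda_{s-1} \geq k$; (b) the distinguished part $\lambda_s = k$; and (c) a lower band of $t - s = s + h - k$ parts $k \geq \lambda_{s+1} \geq \cdots \geq \lambda_t \geq 1$. Requiring band (c) to have a nonnegative number of parts is exactly the summation constraint $s \geq k - h$.

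Next I would count each band by a textbook generating function. Band (a), after subtracting $k$ from every entry, is a partition with at most $s-1$ nonnegative parts, contributing $q^{k(s-1)}/(q)_{s-1}$. Band (b) contributes $q^k$. Band (c), after subtracting $1$ from each of its $s+h-k$ positive parts, becomes a partition with at most $s+h-k$ parts, each at most $k-1$, contributing $q^{s+h-k}\left[{s+h-1 \atop k-1}\right]_q$. Multiplying the three and collecting the exponent $k(s-1) + k + (s+h-k) = (k+1)(s-1) + h + 1$ yields the summand on the left side, and summing over $s \geq k - h$ is the first formula. The second equality is cosmetic: pull out a global factor $q^{h+1-\binom{k}{2}}$ and absorb the compensating $q^{\binom{k}{2}}$ inside each summand.

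I do not foresee any real obstacle here; the argument is pure bookkeeping. The one step to execute with care is the derivation $t = 2s + h - k$, because it rigidly couples the sizes of the upper and lower bands and is the source of the entire decomposition. At the boundary $s = k - h$ the lower band is empty and the $q$-binomial collapses to $\left[{k-1 \atop k-1}\right]_q = 1$, so this term is handled correctly with no special treatment; indices $s < k - h$ (should they arise when $k - h \leq 0$) contribute zero by the standard convention on the $q$-binomial.
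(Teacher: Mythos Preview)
Your argument is correct and matches the paper's own proof essentially line for line. The paper also fixes the position $s$, records the $s-1$ parts of size at least $k$ via $q^{k(s-1)}/(q)_{s-1}$, and handles everything at and below row $s$ via $q^{s+h}\left[{s+h-1 \atop k-1}\right]_q$; the only cosmetic difference is that the paper packages your band (b) together with the first column of your band (c) as ``the hook of size $s+h$'' and calls what remains (your band (c) after subtracting $1$ from each part) the region $\rho \in R_{s+h-k,\,k-1}$.
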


\begin{proof}The proof follows from considering the contributions made by the various portions of the Young diagram of such a partition, shown in Figure \ref{fig:lambadforGF}.

 \begin{figure}[H]

\centering

   \begin{tikzpicture}[scale=0.5]

          \fill[cyan, opacity=0.2] (0,1) rectangle (4,4);
           \draw (-5,2) node[scale=1.0, black] {$[\lambda]=$};
               \draw (4.5,2.5) node[scale=1, blue] {$\gamma$};
                \draw (2,-1.5) node[scale=1, red] {$\rho$};

        \draw [thick] (0,0) -- (0,4);
        \draw [thick] (0,4) -- (4,4);
        \draw [thick] (4,4) -- (4,0);
        \draw [thick] (4,0) -- (0,0);
        \draw[thick] (0,0)--(0,-3);
        \draw[thick] (0,-3)--(1,-3);
        \draw[thick] (1,0)--(1,-3);   
        \draw[red][thick] (1,-3)--(4,-3);
      \draw[red][thick] (4,0)--(4,-3);
        \draw[blue][thick] (4,4)--(7,4);

       \foreach \i in {0,...,2}
         \draw (.5,-.5-\i) node[scale=1]{X};
           \foreach \i in {0,...,3}
         \draw (.5+\i,.5) node[scale=1]{X};

       \draw[decorate,decoration={coil,aspect=0}, blue, thick]   (7,4)--(4,1);

    \draw [red,
    decorate,
    decoration = {calligraphic brace,
        raise=5pt,
        amplitude=5pt,
        aspect=0.5},line width=1.25pt] (0,1) --  (0,4)
node[pos=0.5,left=10pt,red, scale=1]{$s-1$};

   \draw [red,
    decorate,
    decoration = {calligraphic brace,
        raise=5pt,
        amplitude=5pt,
        aspect=0.5},line width=1.25pt] (0,4) --  (4,4)
node[pos=0.5,above=10pt,red, scale=1]{$k$};

       \draw [blue,
    decorate,
    decoration = {calligraphic brace,
        raise=5pt,
        amplitude=5pt,
        aspect=0.5},line width=1.25pt] (0,-3) --  (0,0)
node[pos=0.5,left=10pt,red, scale=1]{$s+h-k$};
\draw (11,3) node[scale=1, blue] {$\gamma \in P_{s-1}$};
            \draw (11,1) node[scale=1, red] {$\rho \in R_{s+h-k, k-1}$};

    \end{tikzpicture}

\caption{$\lambda$ with an $h$-fixed hook $h_{1,s}=s+h$ at part $\lambda_s=k$.}
  \label{fig:lambadforGF}

\end{figure}
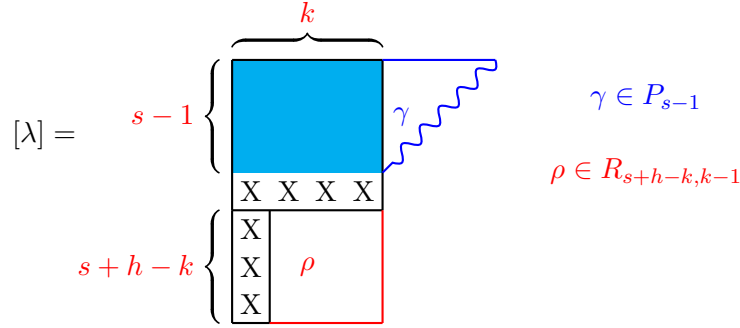

Let $s$ index the position where the hook is to be found.

There are $s-1$ parts of size at least $k$, generated by $q^{k(s-1)} / (q)_{s-1}$.

There is a hook of size $s+h$, generated by $q^{s+h}$.

The leg of the hook is of length $s+h-k$, meaning that the $\rho$ portion of the partition is contributed by the $q$-binomial coefficient $\left[ {{s+h-1} \atop {k-1}} \right]_q$.
\end{proof}

We may now state the main theorems of this section. The first considers fixed hooks corresponding to parts of size one.

\begin{thm}\label{sizeone} The number of partitions of $n$ with an $h$-fixed hook arising from a part of size 1, for $h \geq -1$, equals the number of times in all partitions of $n$ that 1 appears exactly $h+1$ times, with the exception of $n=0$, $h=-1$.
\end{thm}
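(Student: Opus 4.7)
The plan is to specialize Theorem \ref{hfixsizek} to $k=1$, simplify the resulting sum by Euler's identity, and match the closed form against the direct generating function for partitions of $n$ in which $1$ appears with multiplicity exactly $h+1$.

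First I would set $k=1$ in Theorem \ref{hfixsizek}. The $q$-binomial coefficient $\left[ {{s+h-1} \atop 0} \right]_q$ collapses to $1$, so the generating function reduces to the single sum $\sum_{s} q^{2s+h-1}/(q)_{s-1}$. I would track the summation range carefully: the constraint $s \geq 1-h$ from Theorem \ref{hfixsizek} is vacuous for $h \geq 0$ (the sum runs from $s=1$), while for $h = -1$ it forces $s \geq 2$. Reindexing by $m = s-1$ produces $q^{h+1} \sum_{m} q^{2m}/(q)_m$, with $m \geq 0$ when $h \geq 0$ and $m \geq 1$ when $h = -1$.

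Next I would apply Euler's identity $\sum_{m \geq 0} x^m/(q)_m = 1/(x;q)_\infty$ with $x = q^2$, giving $\sum_{m \geq 0} q^{2m}/(q)_m = 1/(q^2;q)_\infty = (1-q)/(q)_\infty$. Hence for $h \geq 0$ the generating function equals $q^{h+1}(1-q)/(q)_\infty$. On the other side, partitions in which $1$ appears exactly $h+1$ times contribute a factor of $q^{h+1}$ for the mandated ones and $1/(q^2;q)_\infty = (1-q)/(q)_\infty$ for the remaining parts (each of size at least $2$). The two generating functions agree coefficient by coefficient, settling the case $h \geq 0$.

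For the boundary case $h = -1$ the same computation produces $(1-q)/(q)_\infty - 1$, since the $m = 0$ term is now absent. The subtracted $1$ matches exactly the coefficient of $q^0$ on the right-hand side: the empty partition of $n = 0$ contains zero ones (so it meets the multiplicity condition with $h+1 = 0$) yet admits no $(-1)$-fixed hook, which is precisely the stated exception. The main technical care will lie in bookkeeping of this lower summation index, as it is exactly the missing $m=0$ term that accounts for the $n=0$ discrepancy; apart from that, the argument is a one-line application of Euler's identity.
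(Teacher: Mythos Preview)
Your proposal is correct and essentially identical to the paper's proof: both specialize Theorem~\ref{hfixsizek} to $k=1$, collapse the $q$-binomial to $1$, and evaluate the resulting sum via the Euler identity $\sum_{m\ge 0} q^{2m}/(q)_m = 1/(q^2;q)_\infty$, then read off the right-hand side as the generating function for partitions with exactly $h+1$ ones. Your bookkeeping of the lower summation index is slightly more explicit than the paper's, but the handling of the $h=-1$ boundary case (the missing $m=0$ term producing the $n=0$ exception) is the same.
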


\begin{proof} Set $k=1$ in Theorem \ref{hfixsizek}.  The generating function becomes $$\sum_{s=1-h}^\infty q^{2(s-1)+h+1} \left[{{s+h-1} \atop {0}} \right]_q \frac{1}{(q)_{s-1}} = q^{h+1} \sum_{s=1-h}^\infty  \frac{q^{2(s-1)}}{(q)_{s-1}} = \frac{q^{h+1}}{(q^2;q)_\infty}$$

\noindent where the last equality holds as long as $h>-1$, and this is the generating function for a partition with exactly $h+1$ parts of size 1, and all other parts of size at least 2.  When $h = -1$ the sum becomes $\frac{1}{(q^2;q)_\infty} - 1$, the missing term being exactly the exception noted (for $n=0$, the part size 1 appears 0 times once but there is no $-1$-fixed hook arising from a part of size 1).
\end{proof}

A similar proof gives a slightly different interpretation that extends to $h$-fixed hooks for any $h$.

\begin{thm}\label{sizeonesecond} The number of partitions of $n$ with an $h$-fixed hook arising from a part of size 1, for $h \in \mathbb{Z}$, equals the number of times in all partitions of $n-h$ that have length at least $1-h$ where $1$ appears exactly one time.
\end{thm}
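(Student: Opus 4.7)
The plan is to prove Theorem \ref{sizeonesecond} by specializing Theorem \ref{hfixsizek} to $k=1$, following the same strategy as the proof of Theorem \ref{sizeone} but carefully tracking the lower bound of the summation, which is exactly what makes the interpretation work for arbitrary $h \in \mathbb{Z}$ rather than just $h \geq -1$.

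Setting $k = 1$ collapses the $q$-binomial coefficient to $1$, so the generating function of Theorem \ref{hfixsizek} becomes
\[q^{h+1} \sum_{s \geq \max(1,\, 1-h)} \frac{q^{2(s-1)}}{(q)_{s-1}},\]
where the lower bound encodes both $s \geq 1$ (so that the hook indexes a genuine part) and the non-negativity of the leg length $s+h-1 \geq 0$. Substituting $r = s-1$ rewrites this as
\[q^{h+1} \sum_{r \geq \max(0,\, -h)} \frac{q^{2r}}{(q)_r}.\]
Recognizing $q^{2r}/(q)_r$ as the generating function for partitions with exactly $r$ parts, each of size at least $2$ (add $2$ to every part of a partition with at most $r$ parts, padding with $2$s if necessary), the remaining sum enumerates partitions into parts of size $\geq 2$ having at least $\max(0,-h)$ such parts.

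Finally, I would factor $q^{h+1} = q \cdot q^h$ and interpret combinatorially: the factor $q$ adjoins a single part equal to $1$, and $q^h$ shifts the counting variable so that the coefficient of $q^n$ enumerates partitions $\mu \vdash n - h$ containing exactly one part equal to $1$ together with at least $\max(0, -h)$ parts of size $\geq 2$. Equivalently, $\ell(\mu) \geq \max(1,\, 1-h)$. For $h < 0$ this gives $\ell(\mu) \geq 1-h$ exactly as stated, and for $h \geq 0$ the condition $\ell(\mu) \geq 1-h$ is automatic since $\mu$ already contains a $1$. Either way the count matches partitions of $n-h$ with $\ell(\mu) \geq 1-h$ in which $1$ appears exactly once.

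The main point of care is handling the piecewise lower bound $\max(0,-h)$ uniformly and verifying that it translates into the stated length condition in both sign regimes of $h$; no $q$-series identities beyond those appearing in the proof of Theorem \ref{sizeone} are needed, and there is no exceptional case analogous to $(n,h) = (0,-1)$ in the previous theorem because the length condition absorbs the would-be defect.
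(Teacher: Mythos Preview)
Your proof is correct and follows essentially the same approach as the paper: both specialize Theorem~\ref{hfixsizek} to $k=1$, reindex by $r=s-1$ (the paper's substitution ``$m=s+1$'' is a typo for $m=s-1$), and interpret $q^{2r}/(q)_r$ as partitions into exactly $r$ parts of size at least~$2$, with the factor $q^{h+1}=q\cdot q^h$ supplying the unique part~$1$ and the weight shift. The only cosmetic difference is that the paper writes the restricted sum as the full sum $\frac{1}{(q^2;q)_\infty}$ minus the initial $-h$ terms, whereas you interpret the truncated sum directly via the lower bound $r\ge\max(0,-h)$; your treatment of the bound is in fact slightly more careful, since it makes explicit that for $h\ge 0$ the condition $\ell(\mu)\ge 1-h$ is vacuous.
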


\begin{proof} Set $k=1$ in Theorem \ref{hfixsizek}.  The generating function becomes $$\sum_{s=1-h}^\infty q^{2(s-1)+h+1} \left[{{s+h-1} \atop {0}} \right]_q \frac{1}{(q)_{s-1}} = q^{h+1} \sum_{s=1-h}^\infty  \frac{q^{2(s-1)}}{(q)_{s-1}}$$
and by letting $m=s+1$ we get 
$$q^{h+1} \sum_{m=-h}^\infty  \frac{q^{2m}}{(q)_{m}} = q^{h+1} \left( \sum_{m=0}^\infty  \frac{q^{2m}}{(q)_{m}} - \sum_{m=0}^{-h-1}  \frac{q^{2m}}{(q)_{m}}\right),$$
noting that the second sum is equal to $0$ for $h \geq 0$. Finally one gets
$$q^{h+1} \left( \frac{1}{(q^2;q)_\infty} - \sum_{m=0}^{-h-1}  \frac{q^{2m}}{(q)_{m}}\right).$$

\noindent This is precisely $q^h$ times the generating function for partitions that have one part of size 1, all other parts of size at least 2, and at least $1-h$ parts as desired.
\end{proof}

Finally we have the following theorem giving a set of partitions equinumerous with the number of $h$-fixed hooks in partitions of $n$ arising from parts of size 2 or greater.

\begin{thm}\label{minusonefixed} The number of times in all partitions of $n$ that an $h$-fixed hook arises from a part of size $k$ equals the number of partitions of $n + \binom{k}{2} - (h + 1)$ with mex $k$ where ($h+1+{}$the number of parts larger than $k$) is greater than the number of parts less than $k$.
\end{thm}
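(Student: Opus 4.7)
The plan is to start from the generating function in Theorem \ref{hfixsizek} and then identify the remaining factor as a generating function for mex-$k$ partitions satisfying the stated inequality; the shift in the weight and the constants $\binom{k}{2}$ and $h+1$ will emerge naturally.

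Concretely, I would take the second expression from Theorem \ref{hfixsizek} and substitute $a = s-1$ to obtain
$$\sum_{n \geq 0} f_{k,h}(n)\, q^n \;=\; q^{h+1-\binom{k}{2}} \sum_{a \geq k-h-1} \frac{q^{(k+1)a + \binom{k}{2}}}{(q)_a} \left[{{a+h} \atop {k-1}}\right]_q,$$
where $f_{k,h}(n)$ denotes the count on the left side of the theorem. The scalar prefactor $q^{h+1-\binom{k}{2}}$ is exactly the shift $N := n + \binom{k}{2} - (h+1)$, so it remains to recognize
$$G(q) \;:=\; \sum_{a \geq k-h-1} \frac{q^{(k+1)a + \binom{k}{2}}}{(q)_a} \left[{{a+h} \atop {k-1}}\right]_q$$
as $\sum_N g(N)\,q^N$, where $g(N)$ counts partitions of $N$ with mex $k$ and with $h+1+(\text{number of parts}>k) > (\text{number of parts}<k)$.

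For the combinatorial identification of $G(q)$, I would decompose such a partition $\pi$ into three independent pieces: (i) the sub-partition consisting of parts strictly larger than $k$, indexed by $a := \#\{i : \pi_i > k\}$, (ii) the mandatory single copies of $1, 2, \ldots, k-1$ forced by the mex-$k$ condition, and (iii) a partition $\beta$ accounting for the extra copies of parts of size $< k$. Piece (i) contributes $q^{(k+1)a}/(q)_a$, piece (ii) contributes the constant $q^{\binom{k}{2}}$, and piece (iii) is a partition with parts in $\{1,\ldots,k-1\}$. Writing $b := \#\{i : \pi_i < k\}$, one has $\ell(\beta) = b - (k-1)$, so the inequality $h+1+a > b$ translates to $\ell(\beta) \leq a + h - (k-1)$. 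Thus $\beta$ fits in an $(a+h-k+1) \times (k-1)$ box, and its generating function is the $q$-binomial $\left[{{a+h} \atop {k-1}}\right]_q$. Summing over admissible $a$ yields $G(q)$ exactly; the lower bound $a \geq k-1-h$ is automatically enforced by the vanishing of the $q$-binomial.

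The main obstacle is the bookkeeping in piece (iii): one must verify that the strict inequality $h+1+a > b$ transcribes to $\ell(\beta) \leq a+h-k+1$ (not off by one) and that the box dimensions match the top index $a+h$ of the $q$-binomial. A helpful sanity check is $h = -1$, which recovers the Andrews--Merca generating function $\mathcal{M}_k$ and the original truncated pentagonal number theorem statement; the cases $h \geq 0$ and small $k$ are handled uniformly by the vanishing conventions, with no separate edge-case argument needed.
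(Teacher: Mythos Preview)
Your proposal is correct and follows essentially the same route as the paper: start from the second form of the generating function in Theorem~\ref{hfixsizek}, peel off the prefactor $q^{h+1-\binom{k}{2}}$ as the weight shift, and then read the summand $q^{(k+1)(s-1)}/(q)_{s-1}\cdot q^{\binom{k}{2}}\cdot\left[{s+h-1\atop k-1}\right]_q$ as (parts $>k$) $\times$ (mandatory staircase $1,\dots,k-1$) $\times$ (extra parts $<k$ in a box). Your substitution $a=s-1$ and explicit check that $h+1+a>b$ becomes $\ell(\beta)\le a+h-k+1$ are exactly the bookkeeping the paper leaves implicit.
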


\begin{proof} Interpret the latter form of the generating function from Theorem \ref{hfixsizek} as follows.

The constant term outside the sum is the shift given.

Let there be $s-1$ parts of size at least $k+1$, generated by $\frac{q^{(k+1)(s-1)}}{(q)_{s-1}}$. So we are allowed at most $s-1+h$ parts less than $k$.  Since the mex is $k$, there is at least one part each of sizes 1 through $k-1$, giving $q^{\binom{k}{2}}$.

Let as many as $s+h-k$ further parts of size at most $k-1$ be appended, generated by $\left[ {{s+h-1} \atop {k-1}} \right]_q$.
\end{proof}

The most direct connection to Andrews and Merca arises in the case $h=-1$, for now the $h+1$ term vanishes and the generating function becomes precisely $q^{-\binom{k}{2}} \mathcal{M}_k(q)$.  We have the following corollary:

\begin{cor}\label{MkCor} The quantity $M_k(n)$ can also be interpreted as the number of times in all partitions of $n-\binom{k}{2}$ that a $-1$-fixed hook arises from a part of size $k$.
\end{cor}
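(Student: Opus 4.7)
The plan is to specialize Theorem \ref{minusonefixed} to $h = -1$ and then reconcile the resulting shift with the Andrews--Merca characterization of $M_k(n)$. First I would observe that when $h = -1$, the quantity $h+1$ vanishes wherever it appears: the shift in partition size reduces from $n + \binom{k}{2} - (h+1)$ to $n + \binom{k}{2}$, and the weighted inequality ``$(h+1 + \#\{\text{parts} > k\}) > \#\{\text{parts} < k\}$'' becomes precisely ``$\#\{\text{parts} > k\} > \#\{\text{parts} < k\}$'', which is exactly the defining condition for the partitions enumerated by $M_k$. The mex $=k$ hypothesis is unaffected by the specialization. Together these give that the number of $-1$-fixed hooks arising from parts of size $k$ across all partitions of $n$ equals $M_k\!\left(n + \binom{k}{2}\right)$.

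Second, I would re-index by setting $N = n + \binom{k}{2}$, so that $n = N - \binom{k}{2}$, and rewrite the identity in the form stated in Corollary \ref{MkCor}: $M_k(N)$ equals the number of $-1$-fixed hooks arising from parts of size $k$ in all partitions of $N - \binom{k}{2}$. As a sanity check, one may instead read off the conclusion directly from the generating function remark preceding the corollary: setting $h = -1$ in Theorem \ref{hfixsizek} collapses the generating function for $-1$-fixed hooks from parts of size $k$ to $q^{-\binom{k}{2}} \mathcal{M}_k(q)$, so multiplying both sides by $q^{\binom{k}{2}}$ and comparing coefficients yields the corollary at once.

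There is really no substantive obstacle here, since all the combinatorial content has already been packaged into Theorem \ref{minusonefixed} and the Andrews--Merca identification of $\mathcal{M}_k$. The only point warranting care is the bookkeeping of the shift $\binom{k}{2}$ and verifying that the specialization $h=-1$ genuinely matches the Andrews--Merca inequality rather than a skewed variant; once that alignment is recorded, the corollary follows immediately.
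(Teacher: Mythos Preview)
Your argument is correct and matches the paper's derivation: the corollary is obtained exactly by specializing Theorem~\ref{minusonefixed} (equivalently, Theorem~\ref{hfixsizek}) to $h=-1$, so that the shift is $\binom{k}{2}$ and the inequality collapses to the Andrews--Merca condition, giving the generating function $q^{-\binom{k}{2}}\mathcal{M}_k(q)$. The paper additionally supplies an explicit bijection realizing the equality, but your generating-function specialization is the intended proof of the corollary as stated.
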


We also give the following combinatorial proof of Corollary \ref{MkCor}:

\begin{proof}[Bijective proof]
    Let $\lambda$ be a partition of $n$ with a $-1$-fixed hook in position $s$ arising from a part of size $k$. Thus $h_{1,s}(\lambda)=s-1$, $\lambda_s=k$, and $\lambda$ has $2s-k-1$ nonzero parts. Transform $\lambda$ into a new partition $\mu\vdash n+\binom{k}{2}$ as follows. Delete the part $\lambda_s=k$ and subtract one from the remaining $s-k-1$ nonzero parts $\lambda_{s+1}, \lambda_{s+2}, \ldots, \lambda_{2s-k-1}$ below it. Then add one to each of the first $s-1$ parts. Now append a part of every size from 1 to $k-1$. Thus $\mu$ is a partition of $n+\binom{k}{2}$, has minimal excludant $k$, and there are $s-1$ parts larger than $k$ and at most $s-2$ parts less than $k$ as desired. 
    
    This process is also invertible. Suppose $\mu \vdash N$ has mex $k$, $s-1$ parts greater than $k$ and at most $s-2$ parts less than $k$. First remove a single part of sizes $\{1, 2, \ldots, k-1$\} and insert a new part $\lambda_s=k$. Subtracting one from each of the first $s-1$ parts and adding one to parts $s, s+1, \ldots, 2s-k-1$ gives a partition of $N-\binom{k}{2}$ with a $-1$-fixed hook $h_{s,1}=s-1$ with $\lambda_s=k$ as desired,
    completing the proof.
\end{proof}

\begin{exm}
  Note that $\lambda = (11,6,5,5,4,4,\color{red}4\color{black},2,1) \vdash 42$ has a $-1$-fixed hook $h_{7,1}=6$  arising from a part of size $\lambda_7=4$. By taking the fixed hook from $\color{red}4$ and adding 1 to the previous parts we get $(12,7,6,6,5,5,1)$. After one adds on each part up to $4-1=3$ yields $\mu = (12,7,6,6,5,5,3,2,1,1) \vdash 48$.
\end{exm}

It is natural to wonder whether an interpretation similar to the pentagonal number recurrence could be given for the expressions arising from hook shifts other than $h=-1$.

\section{Fixing hook parameters}

The relevant data on the hook-length side of these identities appear to be the hook length $k$, the place $s$, and the fixedness $h$ of the hooks to be considered.  Individual terms indexed by $s$ in the generating functions above set particular values for the place.  If we set a value for the size of the hook instead of the part, we get the following generating function.

\begin{thm}\label{hfixhookk} The generating function for the number of partitions of $n$ with an $h$-fixed hook arising from a hook of size $k$ is $$\sum_{l=1}^k \frac{q^{k + l(k-h-1)}}{(q)_{k-h-1}} \left[{{k-1} \atop {l-1}} \right]_q.$$
\end{thm}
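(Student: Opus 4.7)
The plan is to adapt the Young-diagram decomposition used in the proof of Theorem \ref{hfixsizek}, this time summing over the size of the part at the fixed position rather than over the position itself. The key observation is that once the hook size is fixed to $k$, the position is determined: an $h$-fixed hook $h_{s,1}=s+h$ equal to $k$ forces $s=k-h$. Let $l=\lambda_s$ denote the size of the part at this position; since the leg of the hook has length $k-l \geq 0$ and the part itself must be nonzero, $l$ ranges over $\{1,2,\ldots,k\}$.

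With $s$ and $l$ pinned down, I would decompose $\lambda$ exactly as in Figure \ref{fig:lambadforGF}, but with the roles of ``part size'' and ``hook size'' reinterpreted. The $s-1=k-h-1$ parts above row $s$ must each be at least $l$, contributing $q^{l(k-h-1)}/(q)_{k-h-1}$. The hook itself accounts for exactly $k$ boxes (the row of length $l$ together with a leg of $k-l$ cells), contributing $q^k$. Finally, the cells strictly below row $s$ and strictly right of column $1$ form an arbitrary partition $\rho$ fitting in a rectangle with $k-l$ rows and $l-1$ columns, contributing $\left[ {{k-1} \atop {l-1}} \right]_q$ after using the standard identity that the generating function for partitions in an $a \times b$ rectangle is $\left[ {{a+b} \atop b} \right]_q$.

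Multiplying the three factors gives the $l$-th summand $q^{k+l(k-h-1)}(q)_{k-h-1}^{-1} \left[ {{k-1} \atop {l-1}} \right]_q$, and summing over $l \in \{1,\ldots,k\}$ produces the claimed formula. The argument is essentially a direct repackaging of the earlier Young-diagram decomposition, so I do not anticipate a significant obstacle; the main care needed is keeping the parameters straight, verifying that $1 \leq l \leq k$ exhausts all possibilities, that the implicit constraint $h \leq k-1$ is respected by the factor $(q)_{k-h-1}$, and that the rectangle dimensions combine correctly via $\left[ {{(k-l)+(l-1)} \atop {l-1}} \right]_q = \left[ {{k-1} \atop {l-1}} \right]_q$.
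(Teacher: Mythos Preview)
Your proposal is correct and follows essentially the same approach as the paper's own proof: fix $s=k-h$, let $l=\lambda_s$ range over $\{1,\dots,k\}$, and decompose the Young diagram into the $k-h-1$ parts above (each $\geq l$), the hook of size $k$, and the rectangle $\rho$ of shape $(k-l)\times(l-1)$. Your write-up is in fact a bit more explicit about the range of $l$ and the verification $\left[{{(k-l)+(l-1)} \atop {l-1}}\right]_q=\left[{{k-1} \atop {l-1}}\right]_q$, but the underlying argument is identical.
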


\begin{proof} The proof is direct from considering the contributions made by the various portions of the Young diagram of such a partition.

Let $l$ index the size of part that forms the hook of size $k$.

Since $k = s+h$ in place $s$, we have $s=k-h$ and so there are $k-h-1$ parts of size at least $l$, generated by $q^{l(k-h-1)}/(q)_{k-h-1}$.

There is a hook of size $k$, generated by $q^k$.

The leg of the hook is of length $k-l$, meaning that the $\rho$ portion of the partition is contributed by the $q$-binomial coefficient $\left[{{k-1} \atop {l-1}} \right]_q$.
\end{proof}

If we now sum over all hook sizes $k$, we get an expression for the generating function for all $h$-fixed hooks: 

\begin{thm}
    The generating function for the number of partitions of $n$ with an $h$-fixed hook is $$\sum_{k=1}^\infty \sum_{l=1}^k \frac{q^{k + l(k-h-1)}}{(q)_{k-h-1}} \left[{{k-1} \atop {l-1}} \right]_q  = \sum_{l=1}^\infty q^{l(-h-1)} \sum_{k=l}^\infty \frac{q^{(l+1)k}}{(q)_{k-h-1}} \left[{{k-1} \atop {l-1}} \right]_q.$$
\end{thm}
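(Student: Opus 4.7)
The plan is to treat this result as an essentially immediate consequence of Theorem \ref{hfixhookk}, together with the standing observation that the first-column hook lengths of a partition form a strictly decreasing sequence. That observation (made just after the definition of $h$-fixed hook) implies that for any fixed shift $h$, a partition has at most one $h$-fixed hook, and that hook has a well-defined size $k$. Consequently, the set of partitions of $n$ with an $h$-fixed hook decomposes disjointly according to the size $k$ of the unique fixed hook, so the desired generating function is simply the sum over $k$ of the generating function from Theorem \ref{hfixhookk}. This yields the first equality verbatim. (For $h \ge 0$, only the terms with $k \ge h+1$ can be nontrivial, since otherwise there is no valid position $s = k - h \ge 1$ for such a hook; one either restricts the range accordingly or adopts the usual convention that terms with negative index in $(q)_\bullet$ vanish.)

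For the second equality, the task is purely formal rearrangement. I would swap the order of the double sum: the region $\{(k,l) : k \ge 1,\ 1 \le l \le k\}$ equals the region $\{(l,k) : l \ge 1,\ k \ge l\}$. Then I would rewrite the exponent as
\[
k + l(k-h-1) \;=\; (l+1)k \;+\; l(-h-1),
\]
so that $q^{k+l(k-h-1)} = q^{l(-h-1)} \cdot q^{(l+1)k}$. The factor $q^{l(-h-1)}$ depends only on $l$ and can be pulled outside the inner sum over $k$, producing the right-hand expression.

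There is no real obstacle here; both steps are standard. The only point that might merit a sentence in the write-up is why summing over $k$ does not double-count: this rests squarely on the uniqueness of an $h$-fixed hook within a partition, which in turn is a consequence of the strict monotonicity of the sequence $h_{1,1}(\lambda) > h_{2,1}(\lambda) > \cdots$. Everything else is bookkeeping with $q$-exponents and a Fubini-style interchange on a manifestly absolutely convergent (in the formal power-series sense) double sum.
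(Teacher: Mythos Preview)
Your proposal is correct and matches the paper's approach exactly: the paper simply remarks that summing the generating function of Theorem~\ref{hfixhookk} over all hook sizes $k$ yields the stated expression, leaving the interchange of sums and the exponent rewrite implicit. Your write-up is in fact more detailed than the paper's, which offers no proof beyond the one-line introductory sentence.
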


In the case $h=-1$, the inner sum becomes precisely $q^{-\binom{l}{2}} \mathcal{M}_l(q)$, and so we are getting a shifted sum over all of the $\mathcal{M}_l$.  Note that many sums involving fixed hook lengths strongly resemble Andrews and Merca's original $\mathcal{M}_k(q)$.  One wonders whether a deeper connection is being brought to light here, or if there is a more elegant combinatorial identity to be found, especially for all values of $h$.

If we instead hold $k$ and sum over all $h$, we get the generating function for the number of first-column $k$-hooks in all partitions of $n$.

\begin{thm}\label{firstkhooks}
    The generating function for the number of first column $k$-hooks in all partitions of $n$ is $$\frac{q^k}{(q^k;q)_\infty}\sum_{l=1}^k\frac{1}{(q)_{k-l}}.$$
\end{thm}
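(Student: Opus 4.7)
The strategy is to combine Theorem \ref{hfixhookk} (which gives the generating function for $h$-fixed hooks of size $k$, where the position $s = k-h$ is determined) with a sum over all valid $h$. Every first-column $k$-hook in a partition occupies some position $s \geq 1$, and setting $h = k - s$ makes it an $h$-fixed hook. Conversely, for any $h \leq k-1$, an $h$-fixed hook of size $k$ is a first-column $k$-hook at position $s = k - h \geq 1$. So summing the formula from Theorem \ref{hfixhookk} over all integers $h \leq k-1$ counts every first-column $k$-hook exactly once.

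The plan is first to substitute $m = k - h - 1$, so that as $h$ decreases from $k-1$ to $-\infty$, $m$ runs from $0$ to $\infty$. Theorem \ref{hfixhookk} rewrites as
\[
\sum_{h \leq k-1} \sum_{l=1}^k \frac{q^{k+l(k-h-1)}}{(q)_{k-h-1}} \left[{{k-1}\atop{l-1}}\right]_q
= q^k \sum_{l=1}^k \left[{{k-1}\atop{l-1}}\right]_q \sum_{m=0}^\infty \frac{q^{lm}}{(q)_m}.
\]
The inner sum is the standard $q$-series identity $\sum_{m \geq 0} q^{lm}/(q)_m = 1/(q^l;q)_\infty$ (already invoked in the generating function proof of Theorem \ref{thm: main counting result}), so the expression becomes
\[
q^k \sum_{l=1}^k \frac{1}{(q^l;q)_\infty} \left[{{k-1}\atop{l-1}}\right]_q.
\]

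The last step is algebraic simplification. Using $(q)_\infty = (q)_{l-1}(q^l;q)_\infty$ to rewrite $1/(q^l;q)_\infty = (q)_{l-1}/(q)_\infty$, and expanding the $q$-binomial coefficient as $(q)_{k-1}/((q)_{l-1}(q)_{k-l})$, the factor $(q)_{l-1}$ cancels, yielding
\[
\frac{q^k (q)_{k-1}}{(q)_\infty} \sum_{l=1}^k \frac{1}{(q)_{k-l}} = \frac{q^k}{(q^k;q)_\infty} \sum_{l=1}^k \frac{1}{(q)_{k-l}},
\]
where in the last equality we used $(q)_\infty = (q)_{k-1}(q^k;q)_\infty$. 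This matches the claimed formula.

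The only delicate point is confirming the range of summation for $h$: one should note that as $h \to -\infty$, the position $s = k-h$ grows without bound, but the generating function entries remain well-defined since $(q)_{k-h-1} = (q)_m$ with $m \geq 0$, and the Young diagram in Figure \ref{fig:lambadforGF} is still realizable (the arm length $l-1$ and leg length $k-l$ are both determined by $k$ and $l$ alone, independent of $s$). Once that is in place, the computation is essentially routine $q$-series manipulation.
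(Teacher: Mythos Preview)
Your proof is correct and follows essentially the same route as the paper: sum the generating function of Theorem \ref{hfixhookk} over $h\le k-1$, substitute $m=k-h-1$ (the paper uses $H$), evaluate the inner sum via $\sum_{m\ge 0} q^{lm}/(q)_m = 1/(q^l;q)_\infty$, and simplify. Your write-up is slightly more explicit both in justifying the summation range for $h$ and in carrying out the final algebraic step from $q^k\sum_{l}\left[{k-1\atop l-1}\right]_q/(q^l;q)_\infty$ to the stated formula.
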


\begin{proof}
    The valid range for summation is $-\infty < h \leq k-1$.  Using the substitution $H=k-h-1$, the sum becomes

\begin{align*}\sum_{h=-\infty}^{k-1} \sum_{l=1}^k \frac{q^{k + l(k-h-1)}}{(q)_{k-h-1}} \left[{{k-1} \atop {l-1}} \right]_q &= q^k \sum_{H=0}^\infty \sum_{l=1}^k \frac{q^{lH}}{(q)_H} \left[ {{k-1} \atop {l-1}} \right]_q  \\
&= q^k \sum_{l=1}^k \left[ {{k-1} \atop {l-1}} \right]_q \sum_{H=0}^\infty \frac{q^{lH}}{(q)_H} \\ &= \sum_{l=1}^k \left[ {{k-1} \atop {l-1}} \right]_q \frac{1}{(q^l;q)_\infty} \\
&= \frac{q^k}{(q^k;q)_\infty} \sum_{l=1}^k \frac{1}{(q)_{k-l}}.
\end{align*}
\end{proof}

\bibliography{HLbibliography}

\end{document}